\title{On extremal cacti with respect to the edge Szeged index\\ and edge-vertex Szeged index}
\author{ Shengjie  He$^1$, Rong-Xia Hao$^1$\footnote{Corresponding author.
Emails: he1046436120@126.com (Shengjie  He), rxhao@bjtu.edu.cn (Rong-Xia Hao), yuaimeimath@163.com (Aimei Yu)}, Aimei Yu$^1$\\
{\small\em 1. Department of Mathematics, Beijing Jiaotong University, Beijing,
100044, China}\\
  }
\date{} \textwidth 16cm \textheight 22cm \topmargin 0 cm \hoffset
\newtheorem{theorem}{Theorem}[section]
\newtheorem{lemma}[theorem]{Lemma}
\newtheorem{definition}[theorem]{Definition}
\newtheorem{corollary}[theorem]{Corollary}
\begin{document}
\baselineskip 0.50cm \maketitle

\begin{abstract}
The edge Szeged index and edge-vertex Szeged index of a graph are defined as $Sz_{e}(G)=\sum\limits_{uv\in E(G)}m_{u}(uv|G)m_{v}(uv|G)$
and $Sz_{ev}(G)=\frac{1}{2} \sum\limits_{uv \in E(G)}[n_{u}(uv|G)m_{v}(uv|G)+n_{v}(uv|G)m_{u}(uv|G)],$ respectively, where $m_{u}(uv|G)$ (resp., $m_{v}(uv|G)$) is the number of edges whose distance to vertex $u$ (resp., $v$) is smaller than the distance to vertex $v$ (resp., $u$), and $n_{u}(uv|G)$ (resp., $n_{v}(uv|G)$) is the number of vertices whose distance to vertex $u$ (resp., $v$) is smaller than the distance to vertex $v$ (resp., $u$), respectively. A cactus is a graph in which any two cycles have at most one common vertex.
In this paper, the lower bounds of edge Szeged index and edge-vertex Szeged index for cacti with order $n$ and $k$ cycles are determined, and all the graphs that achieve the lower bounds are identified.

{\bf Keywords}: Edge Szeged index; Edge-vertex Szeged index; Cactus.

{\bf 2010 MSC}: 05C40, 05C90
\end{abstract}

\section{Introduction}
The topological indices are quantity values closely related to chemical structure which can be used in theoretical chemistry for understand the physicochemical properties of chemical compounds.
In this paper, we consider two topological indices named the edge Szeged index and edge-vertex Szeged index, which are closely related to two other topological indices, the Wiener index and the Szeged index.

Let $G$ be a connected graph with vertex set $V(G)$ and edge set $E(G)$. For a vertex $u \in V(G)$, the degree of $u$, denote by $d_{G}(u)$, is the number of vertices
which are adjacent to $u$. Call a vertex $u$ a pendant vertex of $G$, if $d_{G}(u)=1$ and call an edge $uv$ a pendant edge of $G$, if $d_{G}(u)=1$ or $d_{G}(v)=1$.
An edge $e$ is called a cut edge of a connected graph $G$ if $G-e$ is disconnect. For any two vertices $u, v \in V(G)$, let $d_{G}(u, v)$ denote the distance between $u$ and $v$ in $G$. Denote by $P_n$, $S_n$ and $C_n$ a path, star and cycle on $n$ vertices, respectively.

A cactus is a graph that any block is either a cut edge or a cycle. It is also a graph in which any two cycles have at most one common vertex. A cycle in a cactus is called end-block if all but one vertex of this cycle have degree 2. If all the cycles in a cactus have exactly one common vertex, then they form a bundle. Let $\mathcal{C}(n,k)$ be the class of all cacti of order $n$ with $k$ cycles.
Let $C_{0}(n,k) \in \mathcal{C}(n,k)$ be a bundle of $k$ triangles with $n-2k-1$ pendant edges attached at the common vertex of the $k$ triangles (see Fig. 1).
The Wiener index is one of the oldest and the most thoroughly studied topological indices. The Wiener index of a graph $G$ is defined as
$$W(G)=\sum\limits_{\{ u, v\} \subseteq V(G) } d_{G}(u, v).$$
This topological index has been stuided extensively and has been found applications in modelling physicochemical properties. The upper and lower bounds and other aspects of the Wiener index of many graphs have been fully studied; see, e.g., \cite{Al.M.J,C.LXL,Deng.H,Do.RI,Dong.H,guts,LinZ,LXue.M,Zhang.X}.

For any edge $e=uv$ of $G$, $V(G)$ can be partitioned into three sets by comparing with the distance of the vertex in $V(G)$ to $u$ and $v$, and the three sets are as follows:
$$N_{u}(e|G)=\{ w\in V(G): d_{G}(u, w) < d_{G}(v, w) \},$$
$$N_{v}(e|G)=\{ w\in V(G): d_{G}(v, w) < d_{G}(u, w) \},$$
$$N_{0}(e|G)=\{ w\in V(G): d_{G}(u, w) = d_{G}(v, w) \}.$$
The number of vertices of $N_{u}(e|G)$, $N_{v}(e|G)$, and $N_{0}(e|G)$ are denoted by $n_{u}(e|G)$, $n_{v}(e|G)$ and $n_{0}(e|G)$, respectively. If $G$ is a tree, then the formula $W(G)=\sum\limits_{e=uv \in E(G)}n_{u}(e|G)n_{v}(e|G)$
gives a long time known property of the Wiener index.

According to the above formula and result, a new topological index, named by Szeged index, was introduced by Gutman \cite{Gut.A}, which is an extension of the Wiener index and defined by
$$Sz(G)=\sum\limits_{e=uv \in E(G)}n_{u}(e|G)n_{v}(e|G).$$
Since it has been proved to be of great applications in the study of the modeling physicochemical properties of chemical compounds and drugs,
the Szeged index has been studied extensively by many researchers, see, e.g., \cite{AF.T,Ao.M,C.LXL,Do.A,Kh.P,P.V.K,Kha.M.H,Lei.H,Wang,zhang.H,ZhouB.X}.

If $e=uv$ is an edge of $G$ and $w$ is a vertex of $G$, then the distance between $e$ and $w$ is defined as $d_{G}(e,w) = {\rm{min}} \{ d_{G}(u,w),d_{G}(v,w) \}$. For $e=uv \in E(G)$, let $M_u(e|G)$ be the set of edges whose distance to the vertex $u$ is smaller than the distance to the vertex $v$, and $M_v(e|G)$ be the set of edges whose distance to the vertex $v$ is smaller than the distance to the vertex $u$. Set $m_u(e|G)=|M_u(e|G)|$ and $m_v(e|G)=|M_v(e|G)|$. Gutman and Ashrafi \cite{Gut.A.R} introduced an edge version of the Szeged index, named edge Szeged index. The edge Szeged index of $G$ is defined as
$$Sz_{e}(G)=\sum\limits_{uv \in E(G)}m_{u}(uv|G)m_{v}(uv|G).$$
The edge-vertex Szeged index \cite{Fag.M} of $G$ is defined as:
$$Sz_{ev}(G)=\frac{1}{2} \sum\limits_{uv \in E(G)} [n_{u}(uv|G)m_{v}(uv|G)+n_{v}(uv|G)m_{u}(uv|G)].$$

In \cite{Gut.A.R}, some basic properties of the edge Szeged index were established by Gutman. It can be checked that the pendant edges make no contributions to the edge Szeged index of a graph.
In \cite{Kh.M.H}, the edge Szeged index of the Cartesian product of graphs was computed.
In \cite{Cai.X}, Cai and Zhou determined the $n$-vertex unicyclic graphs with the largest, the second largest, the smallest and the second smallest edge Szeged indices, respectively. In \cite{Wang}, Wang determined a lower bound of the Szeged index for cacti of order $n$ with $k$ cycles. In \cite{Al.M.J}, Alaeiyan and Asadpour
characterized the edge-vertex Szeged index of bridge graphs.

In this paper, we give the lower bounds of the edge Szeged index and edge-vertex Szeged index for cacti of order $n$ with $k$ cycles, and also characterize those graphs that achieve the lower bounds. Note that $\mathcal{C}(3, 0)=\{P_3\}$, $\mathcal{C}(3, 1)=\{C_3\}$, $\mathcal{C}(4, 0)=\{P_4,S_4\}$ and $\mathcal{C}(4, 1)=\{C_4,C_{0}(4, 1)\}$. 
By simple computations, among the graphs in $\mathcal{C}(4, 0)$, $S_4$ is the graph with minimum edge Szeged index and edge-vertex Szeged index, respectively. For
the graphs in $\mathcal{C}(4, 1)$, $Sz_{e}(C_4)<Sz_{e}(C_{0}(4, 1))$ and $Sz_{ev}(C_4)>Sz_{ev}(C_{0}(4, 1))$. So in this paper, we only consider the graphs with order $n\geq 5$. Our main result is the following Theorem 1.1.

\begin{theorem}\label{T3.1} For any $G \in \mathcal{C}(n, k)$ ($n\geq 5$), we have
\begin{eqnarray*}Sz_{e}(G) &\geq &2kn+2k^{2}-5k,\\
Sz_{ev}(G) &\geq &\frac{1}{2} (n^{2}-3n+3kn-5k+2),\end{eqnarray*}
with equalities if and only if $G \cong C_{0}(n, k)$.
\end{theorem}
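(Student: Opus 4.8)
The plan is to show that $C_{0}(n,k)$ is the unique minimizer of both indices over $\mathcal{C}(n,k)$, by proving that any $G \in \mathcal{C}(n,k)$ with $G \ncong C_{0}(n,k)$ admits a structure-reducing operation that stays inside $\mathcal{C}(n,k)$ and strictly decreases $Sz_{e}$ and $Sz_{ev}$ simultaneously. Since both indices are bounded below on the finite set $\mathcal{C}(n,k)$, iterating such operations must terminate at $C_{0}(n,k)$, which then yields both inequalities together with the equality case. The values themselves come from a direct computation on $C_{0}(n,k)$: because pendant edges contribute nothing to $Sz_{e}$, only the $3k$ triangle edges matter, and each triangle $vab$ (with $v$ the common center) contributes $(n+k-3)+(n+k-3)+1=2n+2k-5$, so $Sz_{e}(C_{0}(n,k))=k(2n+2k-5)=2kn+2k^{2}-5k$; for $Sz_{ev}$ one adds the $n-2k-1$ pendant contributions $\frac{1}{2}(n+k-2)$ to the per-triangle contribution $2n+k-4$ and simplifies to $\frac{1}{2}(n^{2}-3n+3kn-5k+2)$.

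The structural core is three reduction lemmas, each established by a local transformation. First, if $G$ has a non-pendant cut edge $uv$, I would reduce it: for a cut edge one has the clean identities $m_{u}(uv|G)=|E(G_{u})|$, $m_{v}(uv|G)=|E(G_{v})|$, $n_{u}(uv|G)=|V(G_{u})|$ and $n_{v}(uv|G)=|V(G_{v})|$, where $G_{u},G_{v}$ are the two components of $G-uv$, so when both sides carry edges, collapsing the attachment of one side onto a single vertex of the other makes the cut edge pendant and can be shown to lower both indices. Second, if some cycle has length $\ell \geq 4$, I would shorten it to a triangle while reattaching the $\ell-3$ freed vertices as a pendant structure at one cycle vertex, keeping $n$ and $k$ fixed. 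Third, if the triangles and pendant edges are not all incident to a single common vertex, or if some pendant part is not a single edge, I would push the offending block or path toward a chosen center. Since a graph admitting none of these three reductions is exactly $C_{0}(n,k)$, the claim follows.

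The main obstacle is the cycle-shortening step, and more generally controlling $Sz_{ev}$ in lockstep with $Sz_{e}$. A cut edge is benign because it has empty equidistant vertex set and singleton equidistant edge set, which makes its contribution transparent; a cycle edge $w_{i}w_{i+1}$, by contrast, has in general nonempty sets $N_{0}$ and $M_{0}$ whose sizes depend on the length and parity of the cycle and on the subtrees hanging off it, so the bookkeeping of which vertices and edges move between the $u$-side, the $v$-side, and the equidistant set must be done with care. Because $Sz_{ev}$ mixes the vertex counts $n_{u},n_{v}$ with the edge counts $m_{u},m_{v}$ under a factor $\frac{1}{2}$, a move that visibly decreases $Sz_{e}$ need not obviously decrease $Sz_{ev}$; the delicate point is to set up each transformation, and to partition the resulting change into explicitly signed pieces, so that the net effect is manifestly negative for both indices at once. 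I expect the cut-edge identities to render the first and third reductions essentially routine and to concentrate all of the real difficulty in the cycle-shortening inequality for $Sz_{ev}$.
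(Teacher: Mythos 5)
Your overall strategy coincides with the paper's: a finite extremal argument driven by three local transformations (contracting non-pendant cut edges, concentrating all attachments at one cycle vertex, shortening cycles to triangles), followed by direct evaluation on $C_{0}(n,k)$; your terminal computations (per-triangle contribution $2n+2k-5$ to $Sz_{e}$, per-triangle $2n+k-4$ and per-pendant-edge $\frac{1}{2}(n+k-2)$ to $Sz_{ev}$) and your cut-edge identities are all correct. But the decisive quantitative step is missing, and you concede as much: nowhere do you prove that the cycle transformations decrease either index, and for $Sz_{ev}$ you explicitly defer the entire difficulty. The idea that fills this hole in the paper is its Lemma 2.2: for a cycle $C_{l}$ ($l=2k$ or $2k+1$) whose deletion leaves components $G_{i}$ with $m_{i}$ edges and $n_{i}$ vertices at $v_{i}$, one sets the window sums $y_{i}=m_{i}+\cdots+m_{i-k+1}$ and $x_{i}=n_{i}+\cdots+n_{i-k+1}$; each cycle edge then contributes, e.g., $(y_{i}+k-1)(m-y_{i}+k-1)$ in the even case, and the cycle-edge totals become $f(m,l)+\sum_{i}y_{i}(m-y_{i})\geq f(m,l)$ for $Sz_{e}$ and an analogous expression bounded by $g(n,m,l)$ for $Sz_{ev}$, with equality in both precisely when at most one $m_{i}$ is positive. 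This single decomposition simultaneously proves your third reduction (moving all attachments to one cycle vertex weakly decreases both indices, strictly unless the cycle is already an end-block) and converts cycle shortening into checking positivity of differences of closed forms, e.g. $f(m,r)-f(m+2,r-2)>0$ and $g(n,m,r)-g(n,m+2,r-2)-2(m+r-1)>0$. Without it, or some substitute of equal strength, your plan is an outline whose central inequality is unproven.

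Two further points of substance. First, your one-shot shortening of an $\ell$-cycle directly to a triangle is harder to verify than the paper's incremental version: the paper first makes the cycle an end-block (so the equality case of Lemma 2.2 applies and the cycle-edge sums are exactly $f$ and $g$ on both sides) and then shortens by two at a time, deleting $v_{2}v_{3}$ and $v_{r-1}v_{r}$ and adding $v_{1}v_{3}$ and $v_{1}v_{r-1}$, creating two pendant edges; a direct $\ell\to 3$ move on a cycle that is not yet an end-block would force you to control the cross terms $\sum_{i}y_{i}(m-y_{i})$ and their $Sz_{ev}$ analogues before and after, which your sketch does not address. Second, you never record where $n\geq 5$ enters: in the quadrilateral case the decrease is $f(m,4)-f(m+1,3)=2m-1$, positive only when $m\geq 1$, i.e., when the $C_{4}$ is not the whole graph — and indeed $Sz_{e}(C_{4})=4<5=Sz_{e}(C_{0}(4,1))$, so the theorem genuinely fails at $n=4$ and your $C_{4}$ step must invoke this hypothesis.
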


It is obvious that $2kn+2k^{2}-5k$ and $3kn-5k$ get the minimum values when $k=1$. Thus we have the following corollary.

\begin{corollary}\label{T3.2} For any cactus $G \in \mathcal{C}(n,k)$ with $n\geq 5$ and $k\neq 0$, we have
\begin{eqnarray*}Sz_{e}(G) &\geq& 2n-3,\\
Sz_{ev}(G) &\geq& \frac{1}{2}(n^{2}-3),\end{eqnarray*}
with equalities if and only if $G \cong C_{0}(n, 1)$.
\end{corollary}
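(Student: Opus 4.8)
The plan is to deduce the corollary directly from Theorem~\ref{T3.1}, which I am free to assume, by optimising its two lower bounds over the number of cycles $k$. Fix $n \geq 5$ and let $G \in \mathcal{C}(n,k)$ with $k \geq 1$ be arbitrary. Write $\phi(k) = 2kn + 2k^{2} - 5k$ and $\psi(k) = \frac{1}{2}(n^{2} - 3n + 3kn - 5k + 2)$ for the two right-hand sides in Theorem~\ref{T3.1}, so that $Sz_{e}(G) \geq \phi(k)$ and $Sz_{ev}(G) \geq \psi(k)$. Since $k$ is not fixed in the statement, the key point is that both $\phi$ and $\psi$ are strictly increasing in $k$ on the integers $k \geq 1$; granting this, $Sz_{e}(G) \geq \phi(k) \geq \phi(1)$ and $Sz_{ev}(G) \geq \psi(k) \geq \psi(1)$ for every admissible $G$, and a short computation gives $\phi(1) = 2n - 3$ and $\psi(1) = \frac{1}{2}(n^{2} - 3)$, which are exactly the claimed bounds.

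To establish monotonicity I would compute forward differences rather than differentiate, since $k$ ranges over positive integers. For the edge Szeged bound,
\[
\phi(k+1) - \phi(k) = 2\bigl((k+1)^{2} - k^{2}\bigr) + (2n-5) = 4k + 2n - 3,
\]
which is at least $4 + 10 - 3 = 11 > 0$ whenever $k \geq 1$ and $n \geq 5$, so $\phi$ is strictly increasing. For the edge-vertex bound the $k$-dependence is linear,
\[
\psi(k+1) - \psi(k) = \tfrac{1}{2}(3n - 5),
\]
which is at least $\tfrac{1}{2}(15-5) = 5 > 0$ for $n \geq 5$, so $\psi$ is strictly increasing as well. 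Hence both bounds attain their minima over $k \geq 1$ at $k = 1$, giving $Sz_{e}(G) \geq 2n - 3$ and $Sz_{ev}(G) \geq \frac{1}{2}(n^{2} - 3)$.

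It remains to pin down the equality cases. The equality $Sz_{e}(G) = 2n - 3$ forces both $Sz_{e}(G) = \phi(k)$ and $\phi(k) = \phi(1)$; the first, by the equality clause of Theorem~\ref{T3.1}, requires $G \cong C_{0}(n,k)$, while the second, by the strict monotonicity just proved, forces $k = 1$. Combining these, $G \cong C_{0}(n,1)$, and conversely $C_{0}(n,1)$ does attain the bound; the argument for $Sz_{ev}$ is identical. I expect no real obstacle in the corollary itself: its entire content is the reduction to Theorem~\ref{T3.1} together with the elementary monotonicity in $k$. The genuine difficulty lies in Theorem~\ref{T3.1}, which is assumed here; were it not available, one would have to prove it by structural and transformation arguments on cacti — replacing cycles of length at least four by triangles, and sliding pendant trees and non-pendant cut edges toward a single common vertex, while checking that each move strictly decreases the relevant index — and that is the step I would expect to be hard.
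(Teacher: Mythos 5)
Your proposal is correct and follows exactly the paper's route: the paper derives the corollary from Theorem~\ref{T3.1} by the one-line observation that $2kn+2k^{2}-5k$ and $3kn-5k$ are minimized over $k\geq 1$ at $k=1$, which you merely make explicit via forward differences, together with the (correct) combination of the two equality clauses to force $G\cong C_{0}(n,1)$.
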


\vspace{0.8cm}

\begin{center}   \setlength{\unitlength}{0.7mm}
\begin{picture}(30,40)

\put(-15,10){\circle*{1}}
\put(-5,10){\circle*{1}}
\put(5,10){\circle*{1}}
\put(30,10){\circle*{1}}

\put(15,30){\circle*{1}}

\put(-5,50){\circle*{1}}
\put(-15,50){\circle*{1}}

\put(0,50){\circle*{1}}
\put(10,50){\circle*{1}}

\put(35,50){\circle*{1}}
\put(25,50){\circle*{1}}

\put(-15,50){\line(1,0){10}}
\put(0,50){\line(1,0){10}}
\put(25,50){\line(1,0){10}}

\put(15,30){\line(-3,-2){30}}
\put(15,30){\line(-1,-1){20}}
\put(15,30){\line(-1,-2){10}}
\put(15,30){\line(3,-4){15}}

\put(15,30){\line(-3,2){30}}
\put(15,30){\line(-1,1){20}}
\put(15,30){\line(-3,4){15}}
\put(15,30){\line(-1,4){5}}

\put(15,30){\line(1,2){10}}
\put(15,30){\line(1,1){20}}

\put(11,8){$\cdots$}
\put(16,8){$\cdots$}
\put(14,48.5){$\cdots$}

\put(-22,4){$n-2k-1$ pendant vertices}
\put(-3,52){$k$ triangles}

\put(-15,-5){Fig. 1. $C_{0}(n, k)$}

\end{picture} \end{center}

\vspace{0.1cm}

The rest of this paper is organized as follows. In Section 2, we establish two useful lemmas. In Section 3, we present some transformations of graphs, and use these transformations to prove Theorem \ref{T3.1}. Section 4 is a conclusion.

\section{Useful lemmas}

In this section, we will introduce two useful lemmas which will be used frequently in next section. For short, for an edge $e=uv$ of graph  $G$, we denote $$m'_u(e|G)=n_{u}(e|G)m_{v}(e|G) \mbox{ and } m'_v(e|G)=n_{v}(e|G)m_{u}(e|G).$$

\begin{center}   \setlength{\unitlength}{0.7mm}
\begin{picture}(30,40)

\put(-5,30){\circle*{1}}
\put(-15,30){\circle{20}}
\put(5,30){\circle{20}}
\put(-8,5){$G$}
\put(52,5){$G'$}

\put(-11,30){$u$}
\put(49,30){$u$}

\put(55,30){\circle*{1}}

\put(-17,15){$G_{0}$}
\put(3,15){$G_{1}$}
\put(43,15){$G_{0}$}
\put(63,15){$G_{2}$}

\put(45,30){\circle{20}}
\put(65,30){\circle{20}}

\put(-15,-5){Fig. 2. $G$ and $G'$ in Lemma 2.1}
\end{picture} \end{center}

\begin{lemma}\label{lem2.1}
Let $G$ and $G'$ be the graphs shown as in $Fig. \, 2$, where $G$ consists of $G_0$ and $G_1$ with a common vertex $u$, and $G'$ consists of $G_0$ and $G_2$ with a common vertex $u$. Then each of the followings holds:\\
(i) For any edge $e=w_1w_2\in E(G_0)$ and $1\leq i\leq 2$, we have \begin{eqnarray*}n_{w_i}(e|G)&=&n_{w_i}(e|G_0)+\delta(u)(|V(G_1)|-1),\\
                 m_{w_i}(e|G)&=&m_{w_i}(e|G_0)+\delta(u)|E(G_1)|,\end{eqnarray*}
                 where $$ \delta(u)=\left\{
                                                 \begin{array}{ll}
                                                   1, & \hbox{$u\in N_{w_i}(e|G_0)$ ;} \\
                                                   0, & \hbox{otherwise.}
                                                 \end{array}
                                               \right.
  $$
$(ii)$ If $|V(G_1)|=|V(G_2)|$ and $|E(G_1)|=|E(G_2)|$, then
\begin{eqnarray*}
\sum\limits_{e=w_1w_2 \in E(G_0)} m_{w_1}(e|G)m_{w_2}(e|G)&=&\sum\limits_{e=w_1w_2 \in E(G_0)} m_{w_1}(e|G')m_{w_2}(e|G'),\\
\sum\limits_{e=w_1w_2 \in E(G_0)} [m'_{w_1}(e|G)+m'_{w_2}(e|G)]&=&\sum\limits_{e=w_1w_2 \in E(G_0)}[ m'_{w_1}(e|G')+m'_{w_2}(e|G')].
\end{eqnarray*}
\end{lemma}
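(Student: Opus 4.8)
The plan is to derive part (i) directly from the fact that $u$ is a cut vertex separating $G_0$ from $G_1$, and then obtain part (ii) as an essentially immediate consequence. First I would record the basic distance identity: since every path from a vertex of $V(G_1)\setminus\{u\}$ to a vertex of $V(G_0)$ must pass through $u$, for any $x\in V(G_1)$ and any $w\in V(G_0)$ we have $d_G(x,w)=d_G(x,u)+d_{G_0}(u,w)$; moreover, distances inside $G_0$ are unchanged, i.e. $d_G(w,w_i)=d_{G_0}(w,w_i)$ for every $w\in V(G_0)$. Then I fix an edge $e=w_1w_2\in E(G_0)$ and compute the four quantities by splitting $V(G)$ and $E(G)$ according to whether they belong to $G_0$ or to $G_1$.

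For the vertex count, I would write $V(G)=V(G_0)\cup(V(G_1)\setminus\{u\})$. Vertices of $V(G_0)$ contribute exactly $n_{w_i}(e|G_0)$ to $n_{w_i}(e|G)$ by the second identity, and this count already records $u$ on its appropriate side. For a vertex $x\in V(G_1)\setminus\{u\}$, the first identity gives $d_G(x,w_1)-d_G(x,w_2)=d_{G_0}(u,w_1)-d_{G_0}(u,w_2)$, so $x$ lies on the same side of $e$ as $u$ does in $G_0$; hence all $|V(G_1)|-1$ such vertices land in $N_{w_i}(e|G)$ precisely when $u\in N_{w_i}(e|G_0)$, which is what the factor $\delta(u)$ encodes. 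This yields the first formula. The edge count is analogous: edges of $E(G_0)$ keep their $G_0$-distances and contribute $m_{w_i}(e|G_0)$, while for any edge $f\in E(G_1)$ the distance to $w_i$ factors through $u$, so all $|E(G_1)|$ of them go to the same side as $u$, contributing $|E(G_1)|\delta(u)$. I would note that the boundary case $u\in\{w_1,w_2\}$ and the equidistant case $u\in N_0(e|G_0)$ both fit the same formula, the latter because then $\delta(u)=0$ for both values of $i$.

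With (i) in hand, part (ii) is a term-by-term comparison. The key observation is that the correction terms in (i) depend on $G_1$ (resp. $G_2$) only through the cardinalities $|V(G_1)|,|E(G_1)|$ (resp. $|V(G_2)|,|E(G_2)|$), whereas $\delta(u)$ and the base quantities $n_{w_i}(e|G_0),m_{w_i}(e|G_0)$ depend only on $G_0$. Hence under the hypotheses $|V(G_1)|=|V(G_2)|$ and $|E(G_1)|=|E(G_2)|$ we get $n_{w_i}(e|G)=n_{w_i}(e|G')$ and $m_{w_i}(e|G)=m_{w_i}(e|G')$ for every $e=w_1w_2\in E(G_0)$ and $i\in\{1,2\}$. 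Consequently each summand $m_{w_1}(e|G)m_{w_2}(e|G)$ equals $m_{w_1}(e|G')m_{w_2}(e|G')$, and likewise $m'_{w_1}(e|G)+m'_{w_2}(e|G)=m'_{w_1}(e|G')+m'_{w_2}(e|G')$; summing over $E(G_0)$ gives the two displayed identities.

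The only delicate point, and the main obstacle, is establishing the distance factorization cleanly for the edge version and confirming the exact bookkeeping: that $n_{w_i}(e|G_0)$ already accounts for $u$, so that exactly $|V(G_1)|-1$ rather than $|V(G_1)|$ extra vertices are added, and that vertices or edges coinciding with or incident to $u$ do not break the formula. Everything after that is a routine consequence of the cut-vertex structure.
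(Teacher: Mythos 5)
Your proof is correct and follows essentially the same route as the paper: the paper's argument likewise rests on the single distance identity $d_G(w_i,w)=d_{G_0}(w_i,u)+d_{G_1}(u,w)$ through the cut vertex $u$, from which (i) follows by the same vertex/edge bookkeeping, and then deduces (ii) exactly as you do, by noting that (i) forces $n_{w_i}(e|G)=n_{w_i}(e|G')$ and $m_{w_i}(e|G)=m_{w_i}(e|G')$ for every edge of $G_0$ and comparing termwise. Your treatment is simply a more detailed write-up of the same argument, with the boundary cases ($u\in\{w_1,w_2\}$, $u\in N_0(e|G_0)$, edges incident to $u$) handled explicitly where the paper leaves them implicit.
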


\begin{proof}
For any edge $e=w_1w_2 \in E(G_0)$ and any vertex $w\in V(G_1)$, $d_{G}(w_i,w)=d_{G_0}(w_i,u)+d_{G_1}(u,w),$ which implies Lemma \ref{lem2.1}(i).

As $|V(G_1)|=|V(G_2)|$ and $|E(G_1)|=|E(G_2)|$, for any edge $e=w_1w_2 \in E(G_0)$ and $1\leq i\leq 2$, by Lemma \ref{lem2.1}(i), $n_{w_i}(e|G)=n_{w_i}(e|G')$ and $m_{w_i}(e|G)=m_{w_i}(e|G')$. Hence Lemma \ref{lem2.1}(ii) holds.
\end{proof}

\begin{definition}\label{Def-1}Let $G$ be a graph of order $n$ with a cycle $C_l=v_1v_2 \cdots v_{l}v_1$.
Assume that $G-E(C_{l})$ has exactly $l$ components $G_1,G_2,\cdots, G_l$, where $G_i$ is the component of $G-E(C_{l})$ that contains $v_i$ for $1\leq i\leq l$.
For $1\leq i \leq l$, let $n_i=|V(G_i)|$ and $m_i=|E(G_i)|$. Let $l=2k$ or $2k+1$. Set $m=\sum_{i=1}^{l}m_i$ and \begin{eqnarray*}y_i&=&m_{i}+m_{i-1}+ \cdots +m_{i-k+1},\\
x_i&=&n_{i}+n_{i-1}+ \cdots +n_{i-k+1},\end{eqnarray*} where the subscripts are taken modulo $l$. 
\end{definition}

\begin{lemma}\label{lem2.2}
Let $G$, $x_i$, $y_i$, $n_i$ and $m_i$ be defined as in Definition \ref{Def-1}. For $1\leq i \leq l$, denote $e_i=v_iv_{i+1}$, where the subscripts are taken modulo $l$. Let $$
f(m,l)=
\left\{
\begin{array}{ll}
2k(k-1)(m+k-1), & \hbox{$l=2k$,} \\
k^{2}(2m+2k+1), & \hbox{$l=2k+1$,}
\end{array}
\right.
$$and $$
g(n,m,l)=
\left\{
\begin{array}{ll}
2k^2(n+m)-2kn, & \hbox{$l=2k$,} \\
2k^2(n+m), & \hbox{$l=2k+1$.}
\end{array}
\right.
$$Then each of the followings holds:
\begin{itemize}
\item[(i)] $
\sum\limits_{e_i \in E(C_{l})}m_{v_i}(e_i|G)m_{v_{i+1}}(e_i|G)  =
\left\{
\begin{array}{ll}
f(m,l)+\sum\limits_{i=1}^{2k}y_i(m-y_i), & \hbox{$l=2k$;} \\
f(m,l)+\sum\limits_{i=1}^{2k+1}y_i(m-m_{i-k}-y_i), & \hbox{$l=2k+1$.}
\end{array}
\right.
$
\item[(ii)] $
\sum\limits_{e_i \in E(C_{l})}[m'_{v_i}(e_i|G)+m'_{v_{i+1}}(e_i|G)]  \\
=\left\{
\begin{array}{ll}
g(n,m,l)+\sum\limits_{i=1}^{2k}[(x_i-k) (m-y_i)+y_i(n-x_i-k)], & \hbox{$l=2k$;} \\
g(n,m,l)+\sum\limits_{i=1}^{2k+1}[(x_i-k)(m-m_{i-k}-y_i)+y_i(n-n_{i-k}-x_i-k)], & \hbox{$l=2k+1$.}
\end{array}
\right.
$

\item[(iii)]  Moreover, we have
\begin{eqnarray}\sum\limits_{e_i \in E(C_{l})}m_{v_i}(e_i|G)m_{v_{i+1}}(e_i|G)\geq  f(m,l)\label{E-11} \\\sum\limits_{e_i \in E(C_{l})}[m'_{v_i}(e_i|G)+m'_{v_{i+1}}(e_i|G)]\geq  g(n,m,l),\label{E-12}\end{eqnarray}
where equalities hold if and only if there is at most one positive integer among $m_1,m_2,\cdots,m_l$.
\end{itemize}

\end{lemma}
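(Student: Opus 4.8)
The plan is to reduce everything to a distance computation on the cycle $C_l$ alone, exploiting that each component $G_j$ is attached to $C_l$ only at the single vertex $v_j$. For any vertex or edge $z$ lying in $G_j$, every shortest path from $z$ to a cycle vertex passes through $v_j$, so $d_G(z,v_a)=d_{G_j}(z,v_j)+d_{C_l}(v_j,v_a)$ for each cycle vertex $v_a$; in particular $d_{C_l}(v_a,v_b)=d_G(v_a,v_b)$. Consequently, relative to the edge $e_i=v_iv_{i+1}$, a vertex (resp. edge) of $G_j$ lies in $N_{v_i}(e_i|G)$, $N_{v_{i+1}}(e_i|G)$ or $N_0(e_i|G)$ exactly according to whether $v_j$ is closer to $v_i$, closer to $v_{i+1}$, or equidistant on the cycle. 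So I would first classify the cycle vertices and cycle edges.

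First I would carry out the cycle bookkeeping separately for $l=2k$ and $l=2k+1$. For $e_i$ on $C_{2k+1}$ the $k$ vertices $v_i,v_{i-1},\dots,v_{i-k+1}$ are closer to $v_i$, the $k$ vertices $v_{i+1},\dots,v_{i+k}$ are closer to $v_{i+1}$, and the single antipodal vertex $v_{i-k}$ is equidistant; among cycle edges only $e_i$ is equidistant and the remaining $2k$ split $k$--$k$. For $C_{2k}$ the $2k$ vertices split $k$--$k$ with none equidistant, while two cycle edges ($e_i$ and its antipode $e_{i+k}$) are equidistant and the other $2k-2$ split $(k-1)$--$(k-1)$. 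Summing the attached components then gives, in the notation of Definition \ref{Def-1}, $n_{v_i}(e_i|G)=x_i$ and $m_{v_i}(e_i|G)=k+y_i$ (odd) or $(k-1)+y_i$ (even), with the $v_{i+1}$ quantities obtained from the identities $x_i+x_{i+k}+n_{i-k}=n$ and $y_i+y_{i+k}+m_{i-k}=m$ (the $m_{i-k},n_{i-k}$ terms absent when $l=2k$). Substituting these into the two sums and expanding --- using $\sum_i m_{i-k}=m$ and $\sum_i n_{i-k}=n$ to collect the index-free contribution into $f(m,l)$ and $g(n,m,l)$ --- yields (i) and (ii). This part is routine bookkeeping; the only care needed is the modular index shifts.

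For (iii) I would first record nonnegativity. In the even case the residual sum is $\sum_i y_i(m-y_i)=\sum_i y_iy_{i+k}\ge 0$ since $0\le y_i\le m$; in the odd case $m-m_{i-k}-y_i=y_{i+k}$, so again each term is $y_iy_{i+k}\ge 0$. For the residual sum in \eqref{E-12} I would observe that each $n_j\ge 1$ (components are connected and nonempty), so $x_i\ge k$, i.e. $x_i-k\ge 0$ and $x_{i+k}-k\ge 0$; rewriting its summand as $(x_i-k)y_{i+k}+y_i(x_{i+k}-k)$ exhibits it as a sum of nonnegative terms. Hence both inequalities hold.

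The hard part will be the equality characterisation, and the key observation is that it is the \emph{same} combinatorial statement for all four inequalities. Writing $S=\{\,j:m_j>0\,\}$ and noting that for a connected component $m_j>0\iff n_j\ge 2$, one has $y_i>0\iff x_i-k>0\iff W_i\cap S\neq\emptyset$, where $W_i=\{i-k+1,\dots,i\}$. Every residual summand above is therefore strictly positive precisely when both complementary windows $W_i$ and $W_{i+k}$ meet $S$. So all four equalities reduce to the claim: there exists $i$ with $W_i\cap S\neq\emptyset$ and $W_{i+k}\cap S\neq\emptyset$ if and only if $|S|\ge 2$. The ``only if'' direction is immediate since $W_i$ and $W_{i+k}$ are disjoint. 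For the converse I would pick $p\neq q$ in $S$ and seek $i$ with $p\in W_i$ and $q\in W_{i+k}$; this amounts to showing that at least one of the intersections $\{p,\dots,p+k-1\}\cap\{q-k,\dots,q-1\}$ and $\{q,\dots,q+k-1\}\cap\{p-k,\dots,p-1\}$ is nonempty, which I would verify by a short case analysis on the cyclic gap $d=q-p$. The genuinely delicate point is the odd cycle, where $W_i\sqcup W_{i+k}$ omits the antipodal position $i-k$: one must ensure the separating index can be chosen so that neither $p$ nor $q$ lands on the omitted vertex, and this is exactly what the arc-overlap argument guarantees. Translating ``$|S|\ge 2$'' back to ``at least two positive $m_j$'' completes the characterisation.
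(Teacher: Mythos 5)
Your proposal is correct and follows essentially the same route as the paper: both reduce all counts to cycle bookkeeping via the attachment structure (the paper invokes its Lemma~\ref{lem2.1}(i) for your distance-additivity step), arrive at the same closed forms $n_{v_i}(e_i|G)=x_i$ and $m_{v_i}(e_i|G)=y_i+k$ (odd) or $y_i+k-1$ (even), obtain (i) and (ii) by expansion, and prove (iii) by termwise nonnegativity plus the equivalence $x_i-k>0 \iff y_i>0$. Your symmetric rewriting of the residuals as $\sum_i y_iy_{i+k}$ and $\sum_i\left[(x_i-k)y_{i+k}+y_i(x_{i+k}-k)\right]$ and the window-overlap analysis for the converse are just a more explicit rendering of the paper's shorter finish, which uses cyclic symmetry to assume $m_1>0$ and $m_j>0$ with $1<j\leq k+1$ and then observes $y_1(m-y_1)>0$.
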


\begin{proof}
Let $G$ be a graph defined as in Definition \ref{Def-1}.
By Lemma~\ref{lem2.1}(i), for any $e\in E(C_l)$ and one end $v$ of $e$, we have
\begin{eqnarray*}
n_{v}(e|G)=\sum_{j\in N}n_j\mbox{ ~~~~ and ~~~~ }
m_{v}(e|G)=m_{v}(e|C_l)+\sum_{j\in N}m_j,
\end{eqnarray*}where $N=\{j: v_j\in N_{v}(e|G)\}$. So we have\begin{eqnarray}
m_{v_{i}}(e_i|G)m_{v_{i+1}}(e_i|G)  =
\left\{
\begin{array}{ll}
(y_i+k-1)(m-y_i+k-1), & \hbox{$l=2k$,} \\
(y_i+k)(m-m_{i-k}-y_i+k), & \hbox{$l=2k+1$,}
\end{array}
\right.\label{E-9}
\end{eqnarray}
and \begin{eqnarray}
m'_{v_i}(e_i|G)+m'_{v_{i+1}}(e_i|G)=
 \left\{
\begin{array}{ll}
x_i(m-y_i+k-1)+(n-x_i)(y_i+k-1), & \hbox{$l=2k$,} \\
x_i(m-m_{i-k}-y_i+k)+(n-n_{i-k}-x_i)(y_i+k), & \hbox{$l=2k+1$.}
\end{array}\right.\label{E-10}
\end{eqnarray}

By Equality (\ref{E-9}), if $l=2k$,
\begin{eqnarray*}
\sum\limits_{e_i \in E(C_{l})}m_{v_i}(e_i|G)m_{v_{i+1}}(e_i|G)&=&\sum\limits_{i=1}^{2k}(y_i+k-1)(m-y_i+k-1)\\
&=&\sum\limits_{i=1}^{2k}\left[m(k-1)+(k-1)^{2}+y_i(m-y_{i})\right]\\
                 &=&2k(k-1)(m+k-1)+\sum\limits_{i=1}^{2k}y_i(m-y_i);
\end{eqnarray*}
if $l=2k+1$,
\begin{eqnarray*}
\sum\limits_{e_i \in E(C_{l})}m_{v_i}(e_i|G)m_{v_{i+1}}(e_i|G) &=&
\sum\limits_{i=1}^{2k+1}(y_i+k)(m-m_{i-k}-y_i+k)\\
&=&k(2k+1)(m+k)+\sum_{i=1}^{2k+1}y_{i}(m-m_{i-k}-y_{i})-\sum_{i=1}^{2k+1}km_{i-k}\\
&=&k(2k+1)(m+k)+\sum_{i=1}^{2k+1}y_{i}(m-m_{i-k}-y_{i})-km\\
&=&k^{2}(2m+2k+1)+\sum_{i=1}^{2k+1}y_{i}(m-m_{i-k}-y_{i}).
\end{eqnarray*}
This justifies Lemma \ref{lem2.2}(i).

By Equality (\ref{E-10}), if $l=2k$,
\begin{eqnarray*}
\sum\limits_{e_i \in E(C_{l})}[m'_{v_i}(e_i|G)+m'_{v_{i+1}}(e_i|G)]
&=&\sum\limits_{i=1}^{2k}\left[x_i(m-y_i+k-1)+(n-x_i)(y_i+k-1)\right]\\
&=&\sum\limits_{i=1}^{2k}[n(k-1)+mk+m(x_i-k)+ny_i-2x_iy_i]\\
&=&2k^2(n+m)-2kn+\sum\limits_{i=1}^{2k}[(x_i-k) (m-y_i)+y_i(n-x_i-k)];
\end{eqnarray*}
if $l=2k+1$,
\begin{eqnarray*}
&&\sum\limits_{e_i \in E(C_{l})}[m'_{v_i}(e_i|G)+m'_{v_{i+1}}(e_i|G)]\\
&=&\sum\limits_{i=1}^{2k+1}\left[x_i(m-m_{i-k}-y_i+k)+(n-n_{i-k}-x_i)(y_i+k)\right]\\
&=&\sum\limits_{i=1}^{2k+1}[x_i(m-m_{i-k}-y_i)+y_i(n-n_{i-k}-x_{i})-kn_{i-k}+kn]\\
&=&\sum\limits_{i=1}^{2k+1}[(k+x_i-k)(m-m_{i-k}-y_i)+y_i(n-n_{i-k}-x_{i})-kn_{i-k}+kn]\\
&=&\sum\limits_{i=1}^{2k+1}[(x_i-k)(m-m_{i-k}-y_i)+y_i(n-n_{i-k}-x_i-k)+k(m+n-m_{i-k}-n_{i-k})]\\
&=&\sum\limits_{i=1}^{2k+1}[(x_i-k)(m-m_{i-k}-y_i)+y_i(n-n_{i-k}-x_i-k)]-\sum\limits_{i=1}^{2k+1}k(m_{i-k}+n_{i-k})\\
&&+k(n+m)(2k+1)\\
&=&\sum\limits_{i=1}^{2k+1}[(x_i-k)(m-m_{i-k}-y_i)+y_i(n-n_{i-k}-x_i-k)]+2k^2(n+m).
\end{eqnarray*}
This justifies Lemma \ref{lem2.2}(ii).

Let $1\leq i\leq l$. By definitions, $y_i$, $m-y_i$ and $x_i-k$ are nonnegative integers; if $l=2k$, $n-x_i-k\geq 0$; if $l=2k+1$, $m-m_{i-k}-y_i\geq 0$ and $n-n_{i-k}-x_i-k\geq 0$. Moreover, $x_{i}-k=0$ if and only if $y_i=0$; if $l=2k$,  then $n-x_{i}-k=0$ if and only if $m-y_i=0$; if $l=2k+1$, $n-n_{i-k}-x_{i}-k=0$ if and only if $m-m_{i-k}-y_i=0$.
Hence by Lemma \ref{lem2.2}(i) and (ii), Inequalities (\ref{E-11}) and (\ref{E-12}) hold. Furthermore, the equality in (\ref{E-12}) holds if and only if the equality in (\ref{E-11}) holds.

If there is at most one positive integer among $m_1,m_2,\cdots,m_l$, say $m_1\geq0$ and $m_i=0$ for $2\leq i\leq l$, then $m-y_i=0$ (or $m-m_{i-k}-y_i=0$) if $1\leq i\leq k$, and $y_i=0$ if $k+1\leq i\leq l$. Then the equality in (\ref{E-11}) holds. Without loss of generality, now we assume that $m_1>0$ and $m_j>0$. If $l=2k$ or $2k+1$, by symmetry, assume that $1<j\leq k+1$. Then $y_1(m-y_1)>0$ and the equality in (\ref{E-11}) does not hold. This completes the proof of Lemma \ref{lem2.2}.\end{proof}

\section{Cacti with minimum edge Szeged index and edge-vertex Szeged index in $\mathcal{C}(n, k)$}

In this section, we characterize several transformations of cacti which keep the order and the number of edges of the cacti, but decrease the edge Szeged index and edge-vertex Szeged index of the cacti. Using these transformations, we determine the lower bounds of the edge Szeged index and edge-vertex Szeged index of $\mathcal{C}(n,k)$ and those graphs that achieve the lower bounds.

\begin{center}   \setlength{\unitlength}{0.7mm}
\begin{picture}(30,40)

\put(-15,30){\circle*{1}}
\put(-5,30){\circle*{1}}
\put(-15,30){\line(1,0){10}}

\put(-25,30){\circle{20}}
\put(5,30){\circle{20}}

\put(-21,30){$u_{1}$}
\put(-4,30){$u_{2}$}

\put(-13,5){$G$}
\put(52,5){$G'$}

\put(55,30){\circle*{1}}
\put(55,20){\circle*{1}}

\put(49,30){$u_{1}$}
\put(52,15){$u_{2}$}

\put(45,30){\circle{20}}
\put(65,30){\circle{20}}

\put(55,30){\line(0,-1){10}}

\put(-15,-5){Fig. 3. $G$ and $G'$ in Lemma 3.1}

\end{picture} \end{center}

\begin{lemma}\label{lem3.1}
Let $G$ be a graph with order $n$ and a cut edge $u_{1}u_{2}$, and $G'$ be the graph obtained from $G$ by contracting the edge $u_1u_2$ and attaching a pendant edge (which is also denoted by $u_1u_2$) at the contracting vertex; see $Fig. \,3$. If $d_{G}(u_{i}) \geq 2$ for $i=1, 2$, we have  $Sz_{e}(G') < Sz_{e}(G)$ and $Sz_{ev}(G') < Sz_{ev}(G)$.
\end{lemma}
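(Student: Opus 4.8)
The plan is to localize the entire difference between $G$ and $G'$ at the cut edge. Write $G_1$ and $G_2$ for the two components of $G-u_1u_2$ with $u_1\in V(G_1)$ and $u_2\in V(G_2)$, and set $n_i=|V(G_i)|$, $m_i=|E(G_i)|$ for $i=1,2$. Because $u_1u_2$ is a cut edge and $d_G(u_1),d_G(u_2)\ge 2$, each $u_i$ carries at least one further edge inside its own component, so $m_1,m_2\ge 1$ and $n_1,n_2\ge 2$; moreover $n=n_1+n_2$ and $|E(G)|=|E(G')|=m_1+m_2+1$, so the transformation preserves both order and size. The crucial observation is that the subgraph dangling from each side keeps the same order and size. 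In $G$ the part hanging off $G_1$ at $u_1$ is the cut edge together with $G_2$, carrying $n_2$ vertices beyond $u_1$ and $m_2+1$ edges; in $G'$ the part hanging off $G_1$ at the merged vertex $u_1$ is $G_2$ (now sharing $u_1$) together with the new pendant edge, carrying $(n_2-1)+1=n_2$ vertices beyond $u_1$ and again $m_2+1$ edges. Contraction removes one vertex and one edge from this external piece while the new pendant edge restores exactly one of each.

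First I would apply Lemma~\ref{lem2.1} with $G_0=G_1$ and common vertex $u_1$: since the external pieces attached in $G$ and in $G'$ have equal orders and equal sizes, Lemma~\ref{lem2.1}(ii) yields $\sum_{e\in E(G_1)}m_{w_1}(e|G)m_{w_2}(e|G)=\sum_{e\in E(G_1)}m_{w_1}(e|G')m_{w_2}(e|G')$ together with the corresponding identity for the $m'$-sums. A symmetric application with $G_0=G_2$ rooted at $u_2$ (whose image in $G'$ is the merged vertex $u_1$, with external piece $G_1$ plus the pendant edge, carrying $n_1$ vertices and $m_1+1$ edges in both graphs) shows that the edges of $E(G_2)$ also contribute identically to $Sz_e$ and to $\tfrac12\sum[m'_{\cdot}+m'_{\cdot}]$ in $G$ and in $G'$.

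It then remains only to compare the single remaining edge. In $G$ the cut edge $u_1u_2$ has $m_{u_1}=m_1$, $m_{u_2}=m_2$, $n_{u_1}=n_1$, $n_{u_2}=n_2$, contributing $m_1m_2$ to $Sz_e$ and $n_1m_2+n_2m_1$ to $\sum[m'_{\cdot}+m'_{\cdot}]$. In $G'$ the edge $u_1u_2$ is pendant, so $m_{u_2}=0$, $m_{u_1}=m_1+m_2$, $n_{u_2}=1$, $n_{u_1}=n-1$, contributing $0$ to $Sz_e$ and $m_1+m_2$ to $\sum[m'_{\cdot}+m'_{\cdot}]$. Hence $Sz_e(G)-Sz_e(G')=m_1m_2\ge 1>0$, and $Sz_{ev}(G)-Sz_{ev}(G')=\tfrac12[(n_1m_2+n_2m_1)-(m_1+m_2)]=\tfrac12[m_2(n_1-1)+m_1(n_2-1)]\ge 1>0$, which is exactly the assertion. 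The only genuine obstacle is the bookkeeping in the first paragraph: one must verify carefully that, after contracting $u_1u_2$ and re-attaching a pendant edge, the vertex- and edge-counts of the dangling subgraph on each side are unchanged, so that the order- and size-equality hypotheses of Lemma~\ref{lem2.1} are truly met; once this is checked, the two index computations are routine.
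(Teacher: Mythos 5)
Your proof is correct and takes essentially the same route as the paper's: both decompose $G$ at the cut edge into $G_1$ and $G_2$, apply Lemma~\ref{lem2.1}(ii) to conclude that the edges inside $E(G_1)\cup E(G_2)$ contribute identically in $G$ and $G'$, and reduce the comparison to the single edge $u_1u_2$, obtaining $Sz_e(G)-Sz_e(G')=m_1m_2$ and $2\bigl(Sz_{ev}(G)-Sz_{ev}(G')\bigr)=m_2(n_1-1)+m_1(n_2-1)$, both positive since $d_G(u_i)\ge 2$ forces $m_i\ge 1$ and $n_i\ge 2$. Your explicit bookkeeping that the dangling piece on each side keeps the same order and size (contraction removes one vertex and one edge, the new pendant edge restores both) is precisely the justification the paper leaves implicit when invoking Lemma~\ref{lem2.1}(ii).
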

\begin{proof}

Let $G_1$ and $G_2$ be the components of $G-u_{1}u_{2}$ that contain $u_1$ and $u_2$, respectively. By Lemma~\ref{lem2.1}(ii), we have \begin{eqnarray*}\sum\limits_{e=uv \in E(G_1)\cup E(G_2)} m_u(e|G)m_v(e|G)&=&\sum\limits_{e=uv \in E(G_1)\cup E(G_2)} m_u(e|G')m_v(e|G'),\\
\sum\limits_{e=uv \in E(G_1)\cup E(G_2)} m'_u(e|G)+m'_v(e|G)&=&\sum\limits_{e=uv \in E(G_1)\cup E(G_2)} m'_u(e|G')+m'_v(e|G').\end{eqnarray*}
Hence by definitions and Lemma \ref{lem2.1}(ii),  we have
\begin{eqnarray*}
Sz_e(G)-Sz_e(G') & =&m_{u_1}(u_1u_2|G)m_{u_2}(u_1u_2|G)
                    -m_{u_1}(u_1u_2|G')m_{u_2}(u_1u_2|G')\\
                 & =&|E(G_1)||E(G_2)|-0,\\
2(Sz_{ev}(G)-Sz_{ev}(G'))&=&m'_{u_1}(u_1u_2|G)+m'_{u_2}(u_1u_2|G)
                    -m'_{u_1}(u_1u_2|G')-m'_{u_2}(u_1u_2|G')\\
& =&(|V(G_1)||E(G_2)|+|V(G_2)||E(G_1)|)-0- (|E(G_{1})|+|E(G_{2})|)\\
                 & =&(|V(G_1)|-1)|E(G_2)|+(|V(G_2)|-1)|E(G_1)|.
                 \end{eqnarray*}
As  $d_G(u_i)\geq 2$ for $1\leq i\leq 2$, we have $|V(G_{i})|\geq 2$ and $|E(G_i)|\geq 1$. Hence $Sz_{e}(G') < Sz_{e}(G)$ and $Sz_{ev}(G') < Sz_{ev}(G)$.
\end{proof}

\begin{lemma}\label{lem3.2}Let $G$ be a graph of order $n$ with a cycle $C_l=v_1v_2 \cdots v_{l}v_1$.
Assume that $G-E(C_{l})$ has exactly $l$ components $G_1,G_2,\cdots, G_l$, where $G_i$ is the component of $G-E(C_{l})$ that contains $v_i$ for $1\leq i\leq l$.
Let
 \begin{eqnarray*}G'=G- \displaystyle\cup_{i=2}^l\{wv_i: w\in N_{G_i}(v_i)\}+ \cup_{i=2}^l\{wv_1:  w\in N_{G_i}(v_i)\}.\end{eqnarray*}
Then $Sz_e(G') \leq Sz_e(G)$ and $Sz_{ev}(G') \leq Sz_{ev}(G)$ with equalities if and only if $C_{l}$ is an end-block, that is, $G\cong G'$.
\end{lemma}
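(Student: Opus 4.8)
The plan is to transport the collinear ``branches'' $G_2,\dots,G_l$ from their home vertices $v_2,\dots,v_l$ onto the single cycle vertex $v_1$, and to show that both Szeged-type sums do not increase under this operation. The natural framework is Lemma~\ref{lem2.2}, whose formulas are written entirely in terms of the quantities $n_i=|V(G_i)|$, $m_i=|E(G_i)|$, and the partial sums $x_i,y_i$ from Definition~\ref{Def-1}. The key observation is that the transformation $G\mapsto G'$ does not change $n$, $m$, the cycle $C_l$, or the multiset of \emph{edge counts} across the cycle: concentrating all branch edges at $v_1$ simply replaces $(m_1,m_2,\dots,m_l)$ by $(m,0,\dots,0)$ and $(n_1,n_2,\dots,n_l)$ by $(n-l+1,1,\dots,1)$. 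Meanwhile, \emph{every} edge inside the branches and every edge of the remaining subgraphs is untouched, so by Lemma~\ref{lem2.1}(ii) (applied with $G_0$ the union of the branches hanging off the cycle and $G_1$ the rest) the contributions of the edges in $E(G)\setminus E(C_l)$ to $Sz_e$ and $Sz_{ev}$ are identical in $G$ and $G'$. Hence the entire difference $Sz_e(G)-Sz_e(G')$ and $Sz_{ev}(G)-Sz_{ev}(G')$ comes from the cycle edges $e_1,\dots,e_l$, and these are exactly the sums controlled by Lemma~\ref{lem2.2}(i) and (ii).

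First I would make precise the reduction to cycle edges. For $G'$, since all branches are collapsed to $v_1$, we have $m_i=0$ for $2\le i\le l$, i.e.\ at most one of $m_1,\dots,m_l$ is positive. By Lemma~\ref{lem2.2}(iii) this is exactly the equality case, so $\sum_{e_i\in E(C_l)}m_{v_i}(e_i|G')m_{v_{i+1}}(e_i|G')=f(m,l)$ and $\sum_{e_i\in E(C_l)}[m'_{v_i}(e_i|G')+m'_{v_{i+1}}(e_i|G')]=g(n,m,l)$, the minimum values. For the original graph $G$, Lemma~\ref{lem2.2}(iii) gives the reverse inequalities $\sum_{e_i}m_{v_i}m_{v_{i+1}}\ge f(m,l)$ and $\sum_{e_i}[m'_{v_i}+m'_{v_{i+1}}]\ge g(n,m,l)$, with equality if and only if at most one $m_i$ is positive. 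Combining, the cycle-edge contributions satisfy $\ge$ in $G$ versus $=$ in $G'$, and since the off-cycle contributions agree, we conclude $Sz_e(G')\le Sz_e(G)$ and $Sz_{ev}(G')\le Sz_{ev}(G)$.

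For the equality characterization, equality in both expressions forces (by the equality clause of Lemma~\ref{lem2.2}(iii)) that at most one of $m_1,\dots,m_l$ is positive in $G$. I must then argue that this is equivalent to $C_l$ being an end-block, i.e.\ $G\cong G'$. The ``if'' direction is immediate: if $C_l$ is an end-block then only the vertex $v_1$ carries a branch, so only $m_1$ can be positive, the transformation moves nothing, and $G\cong G'$. For the converse, ``at most one $m_i$ positive'' says all the \emph{edges} of the off-cycle part sit in a single component $G_j$; after relabelling we may take $j=1$, and then every $G_i$ with $i\ge 2$ is a single isolated vertex $v_i$, so $C_l$ is an end-block and the transformation is trivial. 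The main subtlety I anticipate is this last step: the equality clause of Lemma~\ref{lem2.2} is phrased purely in terms of the $m_i$, but the definition of end-block is about \emph{degrees} (all but one vertex of $C_l$ have degree $2$); I would need to note that $m_i=0$ means $G_i$ has no edges, hence $G_i=\{v_i\}$ and $\deg_G(v_i)=2$, which is exactly the end-block condition, so the $m_i$-language and the degree-language coincide. This bookkeeping, rather than any inequality, is where care is required.
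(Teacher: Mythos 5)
Your proposal is correct and follows essentially the same route as the paper: reduce to the cycle-edge contributions via Lemma~\ref{lem2.1}(ii), then apply Lemma~\ref{lem2.2}(iii) to both $G$ and $G'$, observing that $G'$ realizes the equality case since all branch edges are concentrated at $v_1$. Your extra bookkeeping for the equality characterization (using that each $G_i$ is connected, so $m_i=0$ forces $G_i=\{v_i\}$ and $d_G(v_i)=2$) is a point the paper leaves implicit, but it is the same argument.
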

\begin{proof}
By Lemma~\ref{lem2.1}(ii), we have \begin{eqnarray*}\sum\limits_{e=uv \not\in E(C_l)} m_u(e|G)m_v(e|G)&=&\sum\limits_{e=uv \not\in E(C_l)} m_u(e|G')m_v(e|G'),\\
\sum\limits_{e=uv \not\in E(C_l)} [m'_u(e|G)+m'_v(e|G)]&=&\sum\limits_{e=uv \not\in E(C_l)} [m'_u(e|G')+m'_v(e|G')].\end{eqnarray*}
So \begin{eqnarray*}Sz_e(G)-Sz_e(G')=\sum\limits_{e=uv \in E(C_l)} m_u(e|G)m_{v}(e|G)-\sum\limits_{e=uv \in E(C_l)} m_{u}(e|G')m_{v}(e|G'),\label{E-1}\end{eqnarray*}
\begin{eqnarray*}2(Sz_{ev}(G)-Sz_{ev}(G'))=\sum\limits_{e=uv \in E(C_l)} [m'_{u}(e|G)+m'_{v}(e|G)]-\sum\limits_{e=uv \in E(C_l)} [m'_{u}(e|G')+m'_{v}(e|G')].\label{E-2}\end{eqnarray*}
Then by Lemma \ref{lem2.2}(iii), Lemma \ref{lem3.2} holds immediately .\end{proof}
\vspace{-0.8cm}
\begin{center}   \setlength{\unitlength}{0.7mm}
\begin{picture}(30,50)

\put(-30,35){\circle*{1}}
\put(-20,40){\circle*{1}}
\put(-10,40){\circle*{1}}
\put(-5,30){\circle*{1}}
\put(-20,20){\circle*{1}}
\put(-10,20){\circle*{1}}
\put(-30,25){\circle*{1}}
\put(-5,30){\line(-1,2){5}}
\put(-5,30){\line(-1,-2){5}}
\put(-10,40){\line(-1,0){10}}
\put(-10,20){\line(-1,0){10}}
\put(-20,40){\line(-2,-1){10}}
\put(-20,20){\line(-2,1){10}}
\put(5,30){\circle{20}}

\put(30,35){\circle*{1}}
\put(40,40){\circle*{1}}
\put(50,40){\circle*{1}}
\put(55,30){\circle*{1}}
\put(40,20){\circle*{1}}
\put(50,20){\circle*{1}}
\put(30,25){\circle*{1}}
\put(55,30){\line(-1,2){5}}
\put(55,30){\line(-1,-2){5}}
\put(55,30){\line(-3,-2){15}}
\put(55,30){\line(-3,2){15}}
\put(40,40){\line(-2,-1){10}}
\put(40,20){\line(-2,1){10}}
\put(65,30){\circle{20}}

\put(56,30){$v_{1}$}
\put(-4,30){$v_{1}$}

\put(-11,15){$v_{2}$}
\put(-21,15){$v_{3}$}

\put(49,15){$v_{2}$}
\put(39,15){$v_{3}$}

\put(-11,42){$v_{r}$}
\put(-24,42){$v_{r-1}$}

\put(49,42){$v_{r}$}
\put(36,42){$v_{r-1}$}

\put(-31,28){$\vdots$}

\put(29,28){$\vdots$}

\put(-13,5){$G$}
\put(52,5){$G'$}

\put(55,30){\circle*{1}}
\put(65,30){\circle{20}}
\put(-15,-5){Fig. 4. $G$ and $G'$ in Lemma 3.3}

\end{picture} \end{center}

\begin{lemma}\label{lem3.3}
Let $G$ be a graph of order $n$ with an end-block $C_r=v_{1}v_{2} \cdots v_{r}v_{1}$  $(r \geq 5)$, and $G'=G-\{ v_2v_3, v_{r-1}v_{r} \}+\{ v_1v_3, v_1v_{r-1} \}$; see $Fig. \,4$. If $d_{G}(v_{1}) \geq 2$, we have $Sz_{e}(G') < Sz_{e}(G)$ and $Sz_{ev}(G') < Sz_{ev}(G)$.

\end{lemma}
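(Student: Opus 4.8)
The plan is to compare $G$ and $G'$ block by block, isolating the cyclic part as the only source of change. First I would record the structure forced by $C_r$ being an end-block: every vertex among $v_2,\dots,v_r$ has degree $2$ in $G$, so if $H$ denotes the component of $G-E(C_r)$ containing $v_1$, then the remaining components of $G-E(C_r)$ are the singletons $\{v_2\},\dots,\{v_r\}$. Writing $n_H=|V(H)|=n-r+1$ and $m_H=|E(H)|$, the graph $G$ is exactly $H$ and the cycle $A:=C_r$ glued at $v_1$. The transformation leaves the edges $v_1v_2$ and $v_rv_1$ in place while stripping $v_2,v_r$ of their other cycle edge, so $G'$ is $H$ glued at $v_1$ to the graph $A'$ consisting of the cycle $C_{r-2}$ on $v_1,v_3,v_4,\dots,v_{r-1}$ together with the two pendant edges $v_1v_2$ and $v_1v_r$. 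The key bookkeeping observation is that $|V(A)|=|V(A')|=r$ and $|E(A)|=|E(A')|=r$, so Lemma~\ref{lem2.1}(ii) applies with $G_0=H$, $G_1=A$, $G_2=A'$, and common vertex $u=v_1$.

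By Lemma~\ref{lem2.1}(ii) the edges of $H$ contribute identically to both indices in $G$ and in $G'$, so every difference comes from the cycle and pendant edges. For the edge Szeged index the two pendant edges $v_1v_2,v_1v_r$ of $G'$ contribute $0$ (a pendant edge never contributes to $Sz_e$), hence
$$Sz_e(G)-Sz_e(G')=\sum_{e\in E(C_r)}m_u(e|G)m_v(e|G)-\sum_{e\in E(C_{r-2})}m_u(e|G')m_v(e|G').$$
In both decompositions at most one component carries a positive edge count (namely $H$ with $m_H$ edges, respectively $H':=H\cup\{v_1v_2,v_1v_r\}$ with $m_H+2$ edges), so the equality case of Lemma~\ref{lem2.2}(iii) identifies the two sums as $f(m_H,r)$ and $f(m_H+2,r-2)$. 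It then remains to verify the purely arithmetic inequality $f(m_H,r)>f(m_H+2,r-2)$, splitting into $r=2k$ and $r=2k+1$ and using $r\ge 5$ (which is exactly what guarantees $r-2\ge 3$, so that $C_{r-2}$ is a genuine cycle): a short computation gives the difference as $2(k-1)(2m_H+k)$ in the even case and $2m_H(2k-1)+2k^2+4k-3$ in the odd case, both strictly positive.

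For the edge-vertex Szeged index I would run the same reduction, but now the pendant edges must be accounted for: for each of $v_1v_2$ and $v_1v_r$ the pendant end sees a single vertex and all $m_H+r-1$ remaining edges, so it contributes $m'_{v_1}+m'_{v_2}=m_H+r-1$ to $2\,Sz_{ev}(G')$. Applying the equality case of Lemma~\ref{lem2.2}(iii) with the function $g$ to the two cyclic sums then yields
$$2\bigl(Sz_{ev}(G)-Sz_{ev}(G')\bigr)=g(n,m_H,r)-g(n,m_H+2,r-2)-2(m_H+r-1).$$
Using $n\ge r$ together with $k\ge 3$ when $r=2k$ and $k\ge 2$ when $r=2k+1$, I would check positivity of the resulting expressions, which simplify to $4(k-1)(n+m_H)-(4k^2-4k+2)$ in the even case and $2n(2k-1)+4m_H(k-1)-(4k^2-4k+4)$ in the odd case; both are strictly positive under these bounds.

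The genuine work, and the only place where care is needed, is the edge-vertex computation: unlike $Sz_e$, the two new pendant edges do contribute, so the clean cancellation supplied by Lemma~\ref{lem2.1} must be combined with the explicit pendant term $2(m_H+r-1)$ before the $g$-difference is evaluated. Everything else is a direct application of Lemmas~\ref{lem2.1} and~\ref{lem2.2} together with the elementary case analysis above.
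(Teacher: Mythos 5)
Your proof is correct and follows essentially the same route as the paper: reduce the comparison to the cycle edges via Lemma~\ref{lem2.1}(ii), handle the two new pendant edges (zero contribution to $Sz_e$, contribution $m_H+r-1$ each to $2\,Sz_{ev}$), invoke the equality case of Lemma~\ref{lem2.2}(iii) to get $f(m_H,r)-f(m_H+2,r-2)$ and $g(n,m_H,r)-g(n,m_H+2,r-2)-2(m_H+r-1)$, and verify positivity by the same case analysis, with matching simplified expressions. Your write-up is in fact slightly more careful than the paper's, making explicit the end-block structure, the applicability of the equality case for $G'$, and the positivity check for the edge-vertex case (which the paper leaves unstated).
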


\begin{proof}
Let $E'=E(C_{r})-\{ v_{2}v_{3}, v_{r-1}v_{r} \}+ \{v_{1}v_{3}, v_{1}v_{r-1}\}$ (where $E' \subseteq E(G')$) and $m=|E(G) \backslash E(C_{r})|$.
By Lemma~\ref{lem2.1}(ii), we have
\begin{eqnarray}
Sz_{e}(G)-Sz_{e}(G')=\sum\limits_{e=xy \in E(C_{r})}m_x(e|G)m_y(e|G)-\sum\limits_{e=xy \in E'}m_x(e|G')m_y(e|G'),\label{E-3}\end{eqnarray}
\begin{eqnarray}2(Sz_{ev}(G)-Sz_{ev}(G'))=\sum\limits_{e=xy \in E(C_{r})}[m'_x(e|G)+m'_y(e|G)]-\sum\limits_{e=xy \in E'}[m'_x(e|G')+m'_y(e|G')].\label{E-4}
\end{eqnarray}
As $v_1v_j$ $(j\in \{2,r\})$ is a pendant edge of $G'$, we have
\begin{eqnarray}
m_{v_1}(v_1v_j|G')m_{v_j}(v_1v_j|G')=0 \mbox{ and } m'_{v_1}(v_1v_j|G')+m'_{v_j}(v_1v_j|G')=m+r-1.\label{E-5}
\end{eqnarray}

By Lemma~\ref{lem2.2}(iii) and Equalities (\ref{E-3}) and (\ref{E-5}), we have
\begin{eqnarray*} Sz_{e}(G)-Sz_{e}(G')
&=&f(m,r)-f(m+2,r-2)\\
&=&\left\{
\begin{array}{ll}
2k(k-1)(m+k-1)-2(k-1)(k-2)(m+k), & \hbox{$r=2k$,} \\
k^2(2m+2k+1)-(k-1)^2(2m+2k+3), & \hbox{$r=2k+1$.}
 \end{array}
 \right.\\
 &=&\left\{
\begin{array}{ll}
2(k-1)(2m+k), & \hbox{$r=2k$,} \\
2k^2+(4k-3)+2m(2k-1), & \hbox{$r=2k+1$.}
 \end{array}
 \right.
\end{eqnarray*}
Since $r\geq 5$, we have $k\geq 2$, and so  $Sz_{e}(G') < Sz_{e}(G)$.

By Lemma~\ref{lem2.2}(iii) and Equalities (\ref{E-4}) and (\ref{E-5}), we have
\begin{eqnarray*} &&2(Sz_{ev}(G)-Sz_{ev}(G'))\\
&=&g(n,m,r)-g(n,m+2,r-2)-2(m+r-1)\\
&=&\left\{
\begin{array}{ll}
2k^2(m+n)-2kn-2(k-1)^2(m+n+2)+2n(k-1)-2(m+2k-1), & \hbox{$r=2k$ ,} \\
2k^2(m+n)-2(k-1)^2(m+n+2)-2(m+2k), & \hbox{$r=2k+1$.}
\end{array}
\right.\\
&=&\left\{
\begin{array}{ll}
4m(k-1)+2n(k-2)+2k(n-2k)+4k-2, & \hbox{$r=2k$ ,} \\
4m(k-1)+2n(k-1)+2k(n-2k)+4k-4, & \hbox{$r=2k+1$.}
\end{array}
\right.
  \end{eqnarray*}
Thus we have $Sz_{ev}(G') < Sz_{ev}(G)$.
\end{proof}

\begin{center}   \setlength{\unitlength}{0.7mm}
\begin{picture}(30,40)

\put(65,30){\circle{20}}
\put(5,30){\circle{20}}

\put(-4,30){$v_{1}$}
\put(-31.5,30){$v_{3}$}

\put(-22,40){$v_{2}$}
\put(-22,20){$v_{4}$}
\put(38,40){$v_{2}$}
\put(38,20){$v_{4}$}

\put(56,30){$v_{1}$}
\put(52,15){$v_{3}$}

\put(-5,30){\line(-1,1){10}}
\put(-5,30){\line(-1,-1){10}}
\put(-25,30){\line(1,1){10}}
\put(-25,30){\line(1,-1){10}}

\put(55,30){\line(-1,1){10}}
\put(55,30){\line(-1,-1){10}}
\put(45,20){\line(0,1){20}}

\put(-13,5){$G$}
\put(52,5){$G'$}

\put(-15,40){\circle*{1}}
\put(-15,20){\circle*{1}}
\put(-25,30){\circle*{1}}

\put(-5,30){\circle*{1}}

\put(55,30){\circle*{1}}
\put(55,20){\circle*{1}}
\put(45,40){\circle*{1}}
\put(45,20){\circle*{1}}

\put(55,30){\line(0,-1){10}}

\put(-15,-5){Fig. 5. $G$ and $G'$ in Lemma 3.4}

\end{picture} \end{center}

\begin{lemma}\label{lem3.4}
Let $G$ be a graph with order $n $ $(n\geq 5)$ and an end-block $C_4=v_{1}v_{2}v_{3}v_{4}v_{1}$, and $G'=G-\{ v_2v_3, v_3v_4 \}+\{ v_2v_4, v_1v_3 \}$; see $Fig. \,5$. Then we have $Sz_{e}(G') < Sz_{e}(G)$ and $Sz_{ev}(G') < Sz_{ev}(G)$.

\end{lemma}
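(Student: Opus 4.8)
The plan is to follow exactly the template established in Lemmas 3.1--3.3: reduce the comparison $Sz_e(G)-Sz_e(G')$ and $2(Sz_{ev}(G)-Sz_{ev}(G'))$ to a sum over the edges that actually change, using Lemma~\ref{lem2.1}(ii) to cancel all the contributions coming from edges outside the modified block. Concretely, let $m=|E(G)\setminus E(C_4)|$ denote the number of edges hanging off $v_1$ (the only vertex of the end-block $C_4$ that may have degree larger than $2$), and let $E'=E(C_4)-\{v_2v_3,v_3v_4\}+\{v_2v_4,v_1v_3\}\subseteq E(G')$ be the image edge set. Since the graph $G_0$ attached at $v_1$ is unchanged and has the same order and size in $G$ and $G'$, Lemma~\ref{lem2.1}(ii) gives that all edges not in $E(C_4)$ (resp. not in $E'$) contribute equally to both indices, so
\begin{eqnarray*}
Sz_e(G)-Sz_e(G')&=&\sum_{e=xy\in E(C_4)}m_x(e|G)m_y(e|G)-\sum_{e=xy\in E'}m_x(e|G')m_y(e|G'),\\
2(Sz_{ev}(G)-Sz_{ev}(G'))&=&\sum_{e=xy\in E(C_4)}[m'_x(e|G)+m'_y(e|G)]-\sum_{e=xy\in E'}[m'_x(e|G')+m'_y(e|G')].
\end{eqnarray*}

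Next I would evaluate the left-hand cycle sums for $G$ directly from Lemma~\ref{lem2.2}: here $C_4$ is an end-block with all its mass concentrated at $v_1$, so in Definition~\ref{Def-1} we have $n_1=n-3$, $m_1=m$ and $n_2=n_3=n_4=1$, $m_2=m_3=m_4=0$. Since at most one of the $m_i$ is positive, Lemma~\ref{lem2.2}(iii) applies with equality, giving $\sum_{E(C_4)}m_xm_y=f(m,4)=2\cdot2\cdot1\cdot(m+1)$ and $\sum_{E(C_4)}[m'_x+m'_y]=g(n,m,4)=8(n+m)-4n$ (the case $l=2k$ with $k=2$). For the right-hand side I would handle $G'$ by inspection, since $G'$ is no longer an even cycle: the four edges of $E'$ are the pendant-like edge $v_1v_3$, the edge $v_1v_2$, and the triangle-ish edges $v_2v_4$ and $v_1v_4$. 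Crucially $v_1v_3$ is a pendant edge of $G'$ (as $d_{G'}(v_3)=1$), so it contributes $0$ to $Sz_e$ and contributes $n+m-1$ to the $Sz_{ev}$ sum, mirroring Equality~(\ref{E-5}). The remaining three edges of $E'$ form a triangle $v_1v_2v_4$ on top of the attached graph $G_0$, whose $m_x,m_y,n_x,n_y$ values I would read off directly: by Lemma~\ref{lem2.1}(i) each edge of this triangle splits the $n-3$ vertices of $G_0$ and its $m$ edges entirely to the $v_1$ side, with the third triangle vertex contributing to $N_0$.

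Having both sides in closed form in terms of $m$ and $n$ (constants, since $k$ is fixed at $2$), the final step is the arithmetic subtraction, which should yield strictly positive expressions; I expect differences on the order of a small linear polynomial in $m$ and $n$ that are manifestly positive for $n\geq 5$ (note $m\geq 0$ always and $n\geq 5$ forces enough mass), exactly as in Lemma~\ref{lem3.3}. The one genuine subtlety—and the step I expect to be the main obstacle—is the bookkeeping for the transformed triangle $v_1v_2v_4$ in $G'$: unlike the clean application of Lemma~\ref{lem2.2} available for $G$, the edge set $E'$ is not a single cycle, so I cannot simply quote $f$ and $g$, and I must instead verify by hand which vertices fall into $N_u$, $N_v$, $N_0$ for each of the three triangle edges (in particular that the attached block $G_0$ together with its $m$ edges is counted on the $v_1$ side for every such edge, and that $v_3$, now a leaf, sits on the appropriate side). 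Once those three small distance comparisons are settled correctly, the inequalities $Sz_e(G')<Sz_e(G)$ and $Sz_{ev}(G')<Sz_{ev}(G)$ follow by direct computation.
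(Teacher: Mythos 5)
Your overall reduction is exactly the paper's: cancel every edge outside the modified block via Lemma \ref{lem2.1}(ii), evaluate the $C_4$-sums in $G$ through the equality case of Lemma \ref{lem2.2}(iii) (all mass sits at $v_1$, so $f(m,4)=4(m+1)$ and $g(n,m,4)=8(n+m)-4n$, as you correctly state), and treat the new pendant edge $v_1v_3$ of $G'$ separately. However, two of your stated bookkeeping claims are wrong. First, the pendant edge's contribution to the $Sz_{ev}$-sum is $m'_{v_1}(v_1v_3|G')+m'_{v_3}(v_1v_3|G')=n_{v_3}\cdot m_{v_1}=1\cdot(|E(G')|-1)=m+3$, not $n+m-1$: the analogue of Equality (\ref{E-5}) reads $m+r-1$ with $r=4$, and the $n$ you substituted does not belong there. (By coincidence this slip does not flip the sign --- you would obtain $n+5m-1$ in place of the correct $2n+5m-5$ --- but it is an error.) Second, and more seriously, your claim that ``the attached block $G_0$ together with its $m$ edges is counted on the $v_1$ side for every such edge'' of the triangle $v_1v_2v_4$ fails for the edge $v_2v_4$: for that edge $v_1$, $v_3$, all of $G_0$, and all $m+1$ edges hanging at $v_1$ are equidistant from $v_2$ and $v_4$, so $m_{v_2}(v_2v_4|G')=m_{v_4}(v_2v_4|G')=1$ and this edge contributes only $1$ to the $Sz_e$-sum and $2$ to the $Sz_{ev}$-sum. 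If the plan were executed as written, pushing those $m+1$ edges onto one side of $v_2v_4$ would inflate the $G'$-side sum and turn the $Sz_e$ difference from $2m-1$ into something like $m-2$, which is not positive for small $m$ --- so this is a step that would genuinely fail, not a cosmetic inaccuracy.

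Two further remarks. The by-hand case analysis you anticipate as the main obstacle is avoidable, and the paper avoids it: in $G'$ the triangle $v_1v_2v_4$ is itself an end-block with $m+1$ edges outside it (the $m$ old edges plus the pendant $v_1v_3$), so Lemma \ref{lem2.2}(iii) applies with equality a second time, giving the triangle sums as $f(m+1,3)=2m+5$ and $g(n,m+1,3)=2(n+m+1)$; hence $Sz_e(G)-Sz_e(G')=f(m,4)-f(m+1,3)=2m-1$ and $2(Sz_{ev}(G)-Sz_{ev}(G'))=g(n,m,4)-g(n,m+1,3)-(m+3)=2n+5m-5$. Finally, make the strictness explicit rather than gesturing that $n\geq 5$ ``forces enough mass'': since the $Sz_e$ difference equals $2m-1$, you need $m\geq 1$, and this follows because $C_4$ is an end-block, so the remaining $n-4\geq 1$ vertices attach through $v_1$ by at least one edge outside $C_4$; this is the one place where the hypothesis $n\geq 5$ is actually used.
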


\begin{proof}
Let $E'= \{ v_{1}v_{2}, v_{1}v_{3}, v_{1}v_{4}, v_{2}v_{4}\} \subseteq E(G')$ and $|E(G)\setminus E(C_{4})|=m$.
By Lemma~\ref{lem2.1}(ii), we have
\begin{eqnarray*}
Sz_{e}(G)-Sz_{e}(G')=\sum\limits_{e=xy \in E(C_{4})}m_x(e|G)m_y(e|G)-\sum\limits_{e=xy \in E'}m_x(e|G')m_y(e|G'),\label{E-6}\end{eqnarray*}
\begin{eqnarray*}2(Sz_{ev}(G)-Sz_{ev}(G'))=\sum\limits_{e=xy \in E(C_{4})}[m'_x(e|G)+m'_y(e|G)]-\sum\limits_{e=xy \in E'}[m'_x(e|G')+m'_y(e|G')].\label{E-7}
\end{eqnarray*}
As $v_1v_3$ is a pendant edge of $G'$, we have
\begin{eqnarray*}
m_{v_1}(v_1v_3|G')m_{v_3}(v_1v_3|G')=0 \mbox{ and } m'_{v_1}(v_1v_3|G')+m'_{v_3}(v_1v_3|G')=3+m.\label{E-8}
\end{eqnarray*}

By Lemma~\ref{lem2.2}(iii), 
we have \begin{eqnarray*}Sz_{e}(G)-Sz_{e}(G')&=&f(m,4)-f(m+1,3)
=4(m+1)-(2m+5)=2m-1,\\
2[Sz_{ev}(G)-Sz_{ev}(G')]&=&g(n,m,4)-g(n,m+1,3)\\
&=&4n+8m-2(n+m+1)-(m+3)\\&=&2n+5m-5.
\end{eqnarray*}
Since $n\geq 5$, $m\geq 1$.
Hence, we have our conclusions.
\end{proof}

{\bf Proof of Theorem \ref{T3.1}.}
Suppose that $G$ is the graph that has minimum edge Szeged index (resp., edge-vertex Szeged index ) in $\mathcal{C}(n, k)$ ($n\geq 5$). Then there is no graph $G'\in \mathcal{C}(n, k)$ such that $Sz_e(G')<Sz_e(G)$ (resp., $Sz_{ev}(G')<Sz_{ev}(G))$. By Lemma~\ref{lem3.1}, all the cut edges of $G$ are pendant edges. By Lemma~\ref{lem3.2}, all the cycles of $G$ are end-blocks. So $G$ must be a graph obtained from a bundle of $k$ cycles by attaching  pendant edges to the common vertex of the $k$ cycles.  By Lemmas~\ref{lem3.3} and~\ref{lem3.4}, the length of any cycle of $G$ is $3$. Hence $G \cong C_{0}(n, k)$. By Lemma \ref{lem2.2}, we have
\begin{eqnarray*}
Sz_e(C_{0}(n, k))&=&kf(n+k-4,3)
=2k(n+k-4)+3k
=2kn+2k^{2}-5k,\\
2Sz_{ev}(C_{0}(n, k))&=&kg(n,n+k-4,3)+(n+k-2)(n-2k-1)\\ &=&2k(2n+k-4)+(n+k-2)(n-2k-1)\\&=&n^{2}-3n+3kn-5k+2.
\end{eqnarray*}
The proof is completed.\hspace{11.8cm}$\square$

\section{Conclusions}

In this paper, the edge Szeged index and edge-vertex Szeged index on cactus are discussed. The extremal graphs with minimum edge Szeged index and edge-vertex Szeged index in the class of all cacti with $n$ vertices and given number of cycles are obtained. For further study, it would be interesting to determine the extremal graph that has the maximum edge Szeged and edge-vertex Szeged index in these class of cacti. Moreover, it would be meaningful to study the edge szeged index and edge-vertex Szeged index of other kinds of graphs.

\section*{Acknowledgments}

This work was supported by the National Natural Science Foundation of China
(Nos. 11371052,11\\371193), the Fundamental Research Funds for the Central Universities (Nos. 2016JBM071, 2016JBZ012) and the $111$ Project of China (B16002).


\end{document}